\newtheorem{thm}{Theorem}[section]
\newtheorem{lem}[thm]{Lemma}
\newtheorem{conj}[thm]{Conjecture}
\newtheorem{prop}[thm]{Proposition}
\newtheorem{cor}[thm]{Corollary}
\theoremstyle{definition}
\newtheorem{rmk}[thm]{Remark}
\numberwithin{equation}{section}
\newcommand\be{\begin{equation}}
\newcommand\ba{\begin{eqnarray}}
\newcommand\ee{\end{equation}}
\newcommand\ea{\end{eqnarray}}
\title[]
{$p$-Adic interpolation of orbits under rational maps}
\author{Jason P.~Bell}
\address{University of Waterloo \\
Department of Pure Mathematics \\
Waterloo, Ontario \\
Canada  N2L 3G1}
\email{jpbell@uwaterloo.ca}
\thanks{The authors were supported by NSERC grant RGPIN-2016-03632.}
\author{Xiao Zhong}
\address{University of Waterloo \\
Department of Pure Mathematics \\
Waterloo, Ontario \\
Canada  N2L 3G1}
\email{x48zhong@uwaterloo.ca}
\subjclass[2010]{37F10, 37P20, 37P55 }
\keywords{Dynamical Mordell-Lang conjecture, $p$-adic interpolation, orbits of points under rational maps}
\begin{document}

\maketitle
\begin{abstract}
     Let $L$ be a field of characteristic zero, let $h:\mathbb{P}^1\to \mathbb{P}^1$ be a rational map defined over $L$, and let $c\in \mathbb{P}^1(L)$. We show that there exists a finitely generated subfield $K$ of $L$ over which both $c$ and $h$ are defined along with an infinite set of inequivalent non-archimedean completions $K_{\mathfrak{p}}$ for which
there exists a positive integer $a=a(\mathfrak{p})$ with the property that for $i\in \{0,\ldots ,a-1\}$ there exists a power series $g_i(t)\in K_{\mathfrak{p}}[[t]]$ that converges on the closed unit disc of $K_{\mathfrak{p}}$ such that $h^{an+i}(c)=g_i(n)$ for all sufficiently large $n$. As a consequence we show that the dynamical Mordell-Lang conjecture holds for split self-maps $(h,g)$ of $\mathbb{P}^1 \times X$ with $g$ \'etale.  
\end{abstract}
%\tableofcontents

\section{Introduction}
The study of dynamics of rational functions in one complex variable enjoys a long and celebrated history, which stretches back to classical work of Fatou, Julia and others \cite{fat1, fat2, jul}. In this setting, one has a field $K$ and a rational function $f\in K(x)$, which one regards as a regular self-map from the projective line over $K$ to itself, and one then asks questions about the orbits of points under iteration of the map $f$. 

A key tool in understanding the dynamics of rational maps on $\mathbb{P}^1$ is by studying their periodic points (i.e., fixed points of given iterates) and their respective basins of attraction. Given a fixed point of a map, an important method when analyzing the behaviour of nearby points comes from analytic uniformization techniques.  In the case of $p$-adic maps there is an appealing trichotomy due to Rivera-Letelier \cite{RL}, which builds upon earlier work of Herman and Yoccoz \cite{HY}, which says that if $p$ is prime, $\mathbb{C}_p$ is the completion of an algebraic closure of $\mathbb{Q}_p$, and $f(z)=\lambda z+ \sum_{i\ge 2} a_i z^i \in \mathbb{C}_p[[z]]$ is a nonzero power series with $|\lambda|_p, |a_i|_p\le 1$ for all $i$, then $f$ has three possible types of analytic uniformization, which are dictated by whether the map $f$ is indifferent, attracting, or superattracting near the fixed point $z=0$.  
More precisely, Rivera-Letelier \cite{RL} shows the following trichotomy holds.
\begin{enumerate}
 \item[(a)] (\emph{Indifferent case}) If $|\lambda|_p=1$, then for $c\in \mathbb{C}_p$ sufficiently close to zero there exist an integer $a>0$ and power series $u_0(z),\ldots ,u_{a-1}(z)$ that converge on the unit disc such that $f^{an+i}(c)=u_i(n)$. 
    \item[(b)] (\emph{Attracting case}) If $0<|\lambda|_p<1$, then there is $r\in (0,1)$ and a power series $u(z)\in \mathbb{C}_p[[z]]$ that maps the closed disc $\overline{B(0,r)}$ bijectively to itself such that $f^n(z)=u(\lambda^n u^{-1}(z))$;
    \item[(c)] (\emph{Superattracting case}) If $\lambda=0$ and then there is some $m\ge 2$ and some $u(z)$ that bijectively maps a disc $\overline{B(0,r_1)}$ to another disc $\overline{B(0,r_2)}$ for some $r_1,r_2>0$ such that $f^n(z)=u((u^{-1}(z))^{m^n})$.
\end{enumerate}

Rivera-Letelier's characterization of possible analytic uniformizations of $p$-adic analytic maps has played an important role within arithmetic dynamics over the past fifteen years. It should be noted, however, that for many number theoretic questions concerning orbits of self-maps, it is much more desirable to work with maps that fall into case (a) of Rivera-Leterlier's trichotomy.  The reason for this is that in these cases one obtains a natural $p$-adic interpolation of orbits of points near the fixed point and so one can use tools from $p$-adic analysis to answer questions about points in the orbit.  On the other hand, case (c) only really says something about the rate of convergence of points in the orbit to the fixed point and it can be more difficult to glean number theoretic information from this information in practice.  

If one restricts one's focus to a fixed prime $p$ and looks at $p$-adic dynamics then cases (b) and (c) are at times unavoidable, but if one is only interested in the orbit of a point $c$ in a characteristic zero field $K$ under a rational map $f\in K(x)$, then in practice one can work instead with a finitely generated extension $K_0$ of $\mathbb{Q}$ inside $K$ over which $f$ and $c$ are both defined and one can then try to find a favourable prime $p$ and an embedding of $K_0$ into $\mathbb{C}_p$ so that case (a) of Rivera-Letelier's trichotomy applies to the orbit of the image of the point $c$.  In fact, we are able to show that there are infinitely many such primes for which one can embed $K_0$ into $\mathbb{C}_p$ such that after replacing $f$ by a suitable iterate we can $p$-adically interpolate the orbit of the image of $c$ with a $p$-adic analytic map. 
  \begin{thm} Let $L$ be a field of characteristic zero and let $h:\mathbb{P}^1\to \mathbb{P}^1$ be a rational map defined over $L$ and let $c\in \mathbb{P}^1(L)$.  Then there exists a finitely generated extension $K$ of $\mathbb{Q}$ over which both $c$ and $h$ are defined along with an infinite set of inequivalent non-archimedean completions $K_{\mathfrak{p}}$ such that 
there exists a positive integer $a=a(\mathfrak{p})$ with the property that for $i\in \{0,\ldots ,a-1\}$ there exists a power series $h_i(t)\in K_{\mathfrak{p}}[[t]]$ that converges on the closed unit disc of $K_{\mathfrak{p}}$ such that $h^{an+i}(c)=h_i(n)$ for all sufficiently large $n$.
\label{thm:main}
 \end{thm}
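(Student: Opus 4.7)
The plan is to spread $(h,c)$ out to a finitely generated base, use good reduction to write the orbit as a disjoint union of arithmetic progressions inside residue discs, identify an infinite set of primes at which the first-return dynamics lies in the indifferent case of Rivera-Letelier's trichotomy, and finally apply the Rivera-Letelier linearization together with an exp/log adjustment to produce a power series interpolation that converges on the closed unit disc.

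First, I would replace $L$ by the subfield $K\subseteq L$ generated over $\mathbb{Q}$ by $c$ and the coefficients of $h$, so that $K$ is finitely generated over $\mathbb{Q}$. If the orbit of $c$ under $h$ is preperiodic the theorem is immediate (take $a$ a multiple of the eventual period and the $h_i$ constants), so assume it is infinite. Next, spread out to a finitely generated subring $\mathcal{O}\subseteq K$ over which $h$ extends to a morphism $\mathbb{P}^1_{\mathcal{O}}\to\mathbb{P}^1_{\mathcal{O}}$ and $c$ extends to an $\mathcal{O}$-point. For each maximal ideal $\mathfrak{p}\subseteq\mathcal{O}$, the reduction $\bar h$ acts on the finite set $\mathbb{P}^1(k_{\mathfrak{p}})$, so $\bar c_n:=\bar h^n(\bar c)$ is eventually periodic with some period $a=a(\mathfrak{p})$. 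For all sufficiently large $n$ and each $i\in\{0,\dots,a-1\}$, the points $c_{am+i}$ (varying $m$) lie in a common residue disc $D_i\subseteq\mathbb{P}^1(K_{\mathfrak{p}})$ that is preserved by $h^a$, and $h^a|_{D_i}$ is a power series convergent on $D_i$ in any local coordinate.

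Suppose now that $\mathfrak{p}$ has the property that for each residue class $i$ the first-return map $h^a|_{D_i}$, written as $b_0+b_1 z+\cdots$ after identifying $D_i$ with the closed unit disc, satisfies $|b_1|_{\mathfrak{p}}=1$. Hensel's lemma applied to $h^a(z)-z$ then produces a fixed point $z_*\in D_i$ whose multiplier $\lambda=(h^a)'(z_*)$ has absolute value one, placing us in case (a) of Rivera-Letelier's trichotomy; Rivera-Letelier provides an analytic conjugacy linearizing $h^a$ near $z_*$. By replacing $a$ with a multiple $aN$ chosen so that $\lambda^N\in 1+\mathfrak{p}^s\mathcal{O}_{\mathfrak{p}}$ for some $s>1/(p-1)$ (possible because $\lambda$ is a unit and the residue image has finite order), the $p$-adic logarithm converges on $\lambda^N$, and $\lambda^{Nn}=\exp(nN\log\lambda)$ becomes a power series in $n$ convergent on the closed unit disc of $K_{\mathfrak{p}}$. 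Conjugating back through the linearization gives power series $h_{i'}(t)\in K_{\mathfrak{p}}[[t]]$ converging on the closed unit disc and satisfying $h^{aNn+i'}(c)=h_{i'}(n)$ for each $i'\in\{0,\dots,aN-1\}$ and all sufficiently large $n$, as required.

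The main obstacle will be producing an infinite set of primes $\mathfrak{p}$ for which the indifference hypothesis $|b_1|_{\mathfrak{p}}=1$ holds at the residue discs visited by the orbit, so as to land in case (a) rather than the attracting or superattracting cases (b) and (c). The multiplier at any fixed periodic point of $h$ is algebraic over $K$ and hence a $\mathfrak{p}$-adic unit at all but finitely many primes, but the periodic point that appears in the residue disc of the orbit depends on $\mathfrak{p}$, and one must rule out that the orbit is systematically drawn into residue discs of attracting or critical periodic points modulo $\mathfrak{p}$. A Chebotarev-style argument on the Galois action on the set of periodic points of $h$ of bounded period, combined with the fact that bad reduction of the multiplier at any specific periodic point is confined to finitely many primes, should supply the desired infinite family of $\mathfrak{p}$.
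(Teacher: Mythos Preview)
Your overall architecture is sound and you have correctly isolated the crux: one must produce infinitely many primes $\mathfrak{p}$ at which the first-return map on the residue disc containing the tail of the orbit is indifferent rather than (super)attracting. However, the final paragraph does not close this gap. The suggestion of ``a Chebotarev-style argument on the Galois action on the set of periodic points of $h$ of bounded period'' is not a proof, and it is not clear that it can be turned into one. The difficulty is that the period $a=a(\mathfrak{p})$ of the reduced orbit is unbounded as $\mathfrak{p}$ varies, so the periodic point $z_*$ your Hensel step would produce has period depending on $\mathfrak{p}$; you therefore cannot restrict attention to a fixed finite set of periodic points and discard the finitely many primes at which their multipliers fail to be units. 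What must actually be ruled out is that for a Zariski-closed set of primes the eventual cycle of $\bar c$ always passes through the reduction of some critical point of $h$, and nothing in your sketch addresses this. (There is also a small slip in the Hensel step: from $|b_1|_{\mathfrak p}=1$ alone you cannot lift a fixed point, since Hensel requires $b_1\not\equiv 1\pmod{\mathfrak p}$; this is easily repaired by passing to a further iterate and invoking Poonen's interpolation instead of a linearization at a fixed point, but it should be said correctly.)

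The paper supplies exactly the missing mechanism, and it is not Chebotarev but the $S$-unit theorem. One first replaces $h$ by an iterate so that, by Fatou's bound on the number of (super)attracting cycles, $h$ has at least three non-superattracting fixed points $\gamma_1,\gamma_2,\gamma_3$ in $\mathbb{A}^1$. Writing $h^n(c)=A_n/B_n$ with $A_n,B_n$ in a suitable finitely generated ring $R$, one observes that a nontrivial relation $\sum_{i=1}^3\rho_i(A_n-\gamma_iB_n)=0$ holds for all $n$. If $A_n-\gamma_iB_n$ were units in $R$ for all $n$ and all $i\in\{1,2,3\}$, the $S$-unit theorem would force the ratios $(A_n-\gamma_1B_n)/(A_n-\gamma_2B_n)$ to take only finitely many values, contradicting non-preperiodicity of $c$. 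Hence some $A_n-\gamma_iB_n$ lies in a maximal ideal $\mathfrak{p}$, which says precisely that $h^n(c)$ lands in the residue disc of the non-superattracting fixed point $\gamma_i$; the construction of $R$ guarantees simultaneously that $h'(\gamma_i)$ is a $\mathfrak{p}$-unit. Running this after inverting any given nonzero $f\in R$ yields a Zariski-dense set of such $\mathfrak{p}$. This is the idea you are missing; once it is in place, your interpolation endgame (pass to a further iterate so the multiplier is $\equiv 1$, then interpolate) is essentially the paper's Lemma~\ref{lem:o} together with Poonen's result.
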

 
 We in fact prove a stronger result than the one given in the statement of Theorem \ref{thm:main} (see Remark \ref{rmk33}).  We also mention that related interpolation results appear in \cite[\S4]{BGHKST}, which deals with the case of split self-maps of $(\mathbb{P}^1)^m$.  Due to the more general setting considered by the authors in \cite{BGHKST}, stronger conditions on the maps involving critical points being preperiodic are necessarily imposed and it does not seem possible to obtain Theorem \ref{thm:main} in its full generality from these related results. 
 
In general, we cannot expect to do better than interpolating tails of orbits along progressions, since there are points whose orbits are preperiodic under certain rational maps and an analytic map that is constant on an infinite set of the closed $p$-adic ball is necessarily identically constant by Strassman's theorem.  The progressions arise in the proof of Theorem \ref{thm:main} since in order to apply interpolation results we must first replace 
$h$ by a suitable iterate $h^a$ and replace the starting point $c$ with $h^m(c)$ for some $m\ge 0$.  For this reason, one cannot eliminate the dependence of the integer $a$ on $\mathfrak{p}$ in the statement of Theorem \ref{thm:main}.  

Once one can interpolate the orbit $\{h^{an}(c_m)\}_{n\ge 0}$ with an analytic map as in the statement of Theorem \ref{thm:main}, one obtains an interpolation for $h^{an+i}(c_m)$ for $b\in \{0,\ldots ,a-1\}$ by applying the rational map $h^b$ to the analytic map interpolating $\{h^{an}(c_m)\}_{n\ge 0}$.

While Rivera-Letelier's results give a strong and very useful trichotomy for studying the dynamics of rational maps over a $p$-adic field, for many arithmetical applications it is often more desirable to have a map in which the dynamics fall into the first two cases of Rivera-Letelier's trichotomy, since one can use parametrization of the orbit by an analytic map to draw conclusions about the map (this idea was apparently first applied by Skolem \cite{Sko}, and has since been use in many other works \cite{Am2, Am, BGT10, BGT2, BSS, BGHKST, BGKT, CX, GX, LL}).  If one works over a fixed $p$-adic field, however, then one cannot guarantee that the orbit is covered under the third case of Rivera-Letelier's trichotomy.  We show that after working over a suitable finitely generated extension $K$ of $\mathbb{Q}$, over which our point and self-map are both defined, we can find many non-archimedean completions of $K$ for which we obtain an analytic interpolation of our orbit as in the statement of Theorem \ref{thm:main}.

In recent years, one of the most important applications of $p$-adic interpolation techniques has been to settle cases of the so-called dynamical Mordell-Lang conjecture, which can be viewed as a natural dynamical analogue of the cyclic case of the classical Mordell-Lang conjecture and was first formulated in \cite{GT}. The classical Mordell-Lang conjecture was settled in a series of works by Faltings \cite{Fal}, Vojta \cite{Voj}, and McQuillan \cite{McQ}.

\begin{conj}{(The dynamical Mordell-Lang Conjecture)}
Let $X$ be a complex quasiprojective variety and let $\Phi$ be a rational self-map of $X$.  Given $c\in X$ with the property that the forward orbit of $c$ under $\Phi$ avoids the indeterminacy locus of $\Phi$ and a Zariski closed subset $Y\subseteq X$, the set of $n \in \mathbb{N}$ such that $\Phi^n(c) \in Y$ is a union of finitely many infinite arithmetic progressions augmented by a finite set.
\label{conj:DML}
\end{conj}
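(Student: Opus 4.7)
The plan is to proceed by the standard $p$-adic Skolem--Mahler--Lech template adapted to dynamics: spread out to an arithmetic model, $p$-adically interpolate the orbit by analytic functions on the closed unit disc, turn the geometric incidence condition $\Phi^n(c)\in Y$ into a vanishing condition on convergent power series in $n$, and finish with Strassman's theorem. First I would spread out: since $X$, $\Phi$, $c$, and $Y$ are cut out by finitely many equations, I replace $\mathbb{C}$ by a finitely generated extension $L$ of $\mathbb{Q}$ over which all the data are defined. The entire forward orbit $\{\Phi^n(c)\}_{n\ge 0}$ then lies in $X(L)$, and the problem becomes arithmetic.

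Next I would produce the interpolation. For $X=\mathbb{P}^1$ and $\Phi=h$, Theorem \ref{thm:main} directly supplies infinitely many non-archimedean completions $K_{\mathfrak{p}}$, a period $a=a(\mathfrak{p})$, and power series $h_0,\dots,h_{a-1}\in K_{\mathfrak{p}}[[t]]$ converging on the closed unit disc with $h^{an+i}(c)=h_i(n)$ for all sufficiently large $n$. For the split setting $\Phi=(h,g)$ on $\mathbb{P}^1\times X$ with $g$ \'etale, I would combine Theorem \ref{thm:main} applied to the first factor with the \'etale $p$-adic interpolation result of Bell--Ghioca--Tucker applied to the second factor, choosing a common prime $\mathfrak{p}$ of good reduction for both pieces and passing to a common period so that both coordinates of the orbit of $c$ are simultaneously interpolated by power series on the same $p$-adic unit disc.

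With this interpolation in hand, the condition $\Phi^{an+i}(c)\in Y$ translates, for each residue $i\in\{0,\dots,a-1\}$ and each defining equation $Q$ of $Y$, into the vanishing of $Q$ evaluated at the interpolating power series, which is itself a convergent power series in $n$ on the closed $p$-adic unit disc. By Strassman's theorem each such analytic equation in $n$ either vanishes identically on $\mathbb{Z}_p$, contributing the full arithmetic progression $\{an+i : n\gg 0\}$ to the return set, or has only finitely many zeros in $\mathbb{Z}_p$, contributing only finitely many $n$. Intersecting over the defining equations $Q$ of $Y$ and taking the union over the finitely many residues $i$ yields the description of $\{n : \Phi^n(c)\in Y\}$ as a finite union of infinite arithmetic progressions together with a finite set.

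The main obstacle is the interpolation step, and the bulk of the work is delegated to Theorem \ref{thm:main}. In Rivera--Letelier's trichotomy, only the indifferent case (a) admits a power series parameterizing the whole tail of an orbit on the closed unit disc, while at any fixed prime one may be stuck in the attracting or superattracting cases. The content of Theorem \ref{thm:main} is that by varying $\mathfrak{p}$ one can always land in an indifferent regime after passing to a suitable iterate $h^a$ and a suitable shift $h^m(c)$, which is exactly what makes the above plan go through for $X=\mathbb{P}^1$ and, by combining with \'etale interpolation, for split maps on $\mathbb{P}^1\times X$ with $g$ \'etale. For genuinely higher-dimensional $X$ lacking an \'etale structure no analogue of Theorem \ref{thm:main} is presently available, which is where the approach stalls and the full conjecture remains open.
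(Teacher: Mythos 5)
The statement you were asked to prove is Conjecture \ref{conj:DML} itself, which is an open conjecture: the paper states it but does not prove it, and neither does your proposal. What you outline --- spreading out to a finitely generated arithmetic model, $p$-adically interpolating the orbit along arithmetic progressions via Theorem \ref{thm:main} on the $\mathbb{P}^1$ factor and the Bell--Ghioca--Tucker \'etale interpolation on the second factor, and finishing with Strassman's theorem applied to the defining equations of $Y$ --- is essentially the paper's proof of Corollary \ref{cor:main}, i.e.\ of the special case of split maps $(h,g)$ on $\mathbb{P}^1\times X$ with $g$ \'etale. You concede as much in your final sentence. Judged as a proof of the conjecture as stated (an arbitrary rational self-map of an arbitrary complex quasiprojective variety), the argument therefore has a fundamental gap: the interpolation step on which everything rests is simply unavailable in general, and no amount of varying the prime repairs it.

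This is not merely a missing lemma; the paper's concluding remarks exhibit the obstruction explicitly. For $f(x,y)=(x+1,\,y(x+1))$ on $\mathbb{A}^2$ one has $f^n(0,1)=(n,n!)$, and $|n!|_p\to 0$ for every prime $p$, so no tail of this orbit can be interpolated by a $p$-adic analytic function on the closed unit disc at any prime whatsoever. Hence the Skolem--Mahler--Lech template cannot establish Conjecture \ref{conj:DML} beyond settings where interpolation is known. For the special case you do treat, your outline matches the paper's argument for Corollary \ref{cor:main} step for step; the paper additionally enlarges the ring $R$ of Equation (\ref{eq:7}) to carry a model of $X$ and $g$ (Remark \ref{rmk32}), uses the Zariski density of the set of maximal ideals produced by Proposition \ref{prop:main} to land in the open locus where $g$ extends to an unramified endomorphism, and passes to the common period $e=\mathrm{lcm}(a,b)$ --- details you gesture at but do not carry out.
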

We note that it is possible to have an empty union of infinite arithmetic progressions in Conjecture \ref{conj:DML}, in which case the orbit of $c$ has finite intersection with $Y$; it is also possible for the finite set to be empty.

Conjecture \ref{conj:DML} is known in several cases, including when $\Phi$ is \'etale, when $X=\mathbb{A}^2$ and $\Phi$ is respectively an endomorphism \cite{Xie1} and a birational self-map \cite{Xie2}, and in several other cases \cite{BGKT, GX}. In the case when $\Phi$ is \'etale, the orbit of a point has many $p$-adic parametrizations along progressions, and this fact, combined with Theorem \ref{thm:main}, allows us to deduce that Conjecture \ref{conj:DML} holds for split maps $(h,g)$ of $\mathbb{P}^1\times X$ with $g$ \'etale.

\begin{cor} Let $X$ be a complex quasiprojective variety and let $g:X\to X$ be an \'etale self-map of $X$ and let $h\in \mathbb{C}(x)$.  If $\Phi=(h,g): \mathbb{P}^1\times X\to \mathbb{P}^1\times X$, $c\in \mathbb{P}^1\times X$, and $Y\subseteq \mathbb{P}^1\times X$ is a Zariski closed subset, then $\{n\colon \Phi^n(c)\in Y\}$ is a finite union of infinite arithmetic progressions along with a finite set.
\label{cor:main}
\end{cor}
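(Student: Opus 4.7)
The plan is to combine Theorem \ref{thm:main} on the $\mathbb{P}^1$ factor with the classical $p$-adic interpolation theorem for \'etale maps on the $X$ factor, arrange both interpolations simultaneously at a common non-archimedean completion, and then reduce the Mordell--Lang condition to an analytic incidence problem controlled by Strassmann's theorem.

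First I would spread out all the data. Writing $c=(c_1,c_2)$, I would choose a finitely generated subfield $K\subseteq \mathbb{C}$ over which $h$, $g$, $c_1$, $c_2$, $X$, and $Y$ are all defined, over which $g$ remains \'etale, and over whose ring of integers $\O_K$ there is a model of $X$ smooth at cofinitely many places (shrinking $X$ to a $g$-invariant open containing the orbit of $c_2$ if needed, which only removes finitely many indices from the answer). Applying Theorem \ref{thm:main} to the pair $(h,c_1)$ supplies an infinite set $S$ of non-archimedean completions $K_{\mathfrak{p}}$, together with integers $a=a(\mathfrak{p})$ and power series $h_i\in K_{\mathfrak{p}}[[t]]$ converging on the closed unit disc with $h^{an+i}(c_1)=h_i(n)$ for large $n$ and $i\in\{0,\dots,a-1\}$.

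Next I would invoke the standard $p$-adic interpolation underlying the \'etale case of dynamical Mordell--Lang: at every place $\mathfrak{p}$ of good reduction for $(X,g,c_2)$, after passing to a suitable iterate $g^b$ and translating by finitely many steps, the orbit of $c_2$ under $g^b$ admits a $K_{\mathfrak{p}}$-analytic parametrization $\gamma_j:\overline{B(0,1)}\to X(K_{\mathfrak{p}})$ with $\gamma_j(n)=g^{bn+j}(c_2)$ for $j\in\{0,\dots,b-1\}$. Only finitely many places need to be excluded in this construction, so the set of good $\mathfrak{p}$ meets the infinite set $S$, and I fix one $\mathfrak{p}$ where both interpolations apply. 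Setting $N=\operatorname{lcm}(a,b)$, standard refinement of progressions yields, for each residue class $r\pmod{N}$, a $K_{\mathfrak{p}}$-analytic map $\psi_r:\overline{B(0,1)}\to \mathbb{P}^1(K_{\mathfrak{p}})\times X(K_{\mathfrak{p}})$ with $\psi_r(n)=\Phi^{Nn+r}(c)$ for all sufficiently large $n$.

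Finally, for each residue class $r$ I would cover the image of $\psi_r$ by finitely many affine charts of $\mathbb{P}^1\times X$ (this is possible because the analytic image is compact, hence meets only finitely many charts) on which $Y$ is cut out by finitely many regular functions $f_1,\dots,f_k$. Pulling back along $\psi_r$ yields power series $F_1,\dots,F_k\in K_{\mathfrak{p}}[[t]]$ converging on $\overline{B(0,1)}$, and Strassmann's theorem forces the set of $n$ with $\Phi^{Nn+r}(c)\in Y$ to be either cofinite in the progression or finite. Aggregating over the $N$ classes yields the desired decomposition as a finite union of infinite arithmetic progressions together with a finite exceptional set. The main obstacle is the coordination step: one must know that the infinite family $S$ of primes supplied by Theorem \ref{thm:main} intersects the cofinite family of primes at which the \'etale interpolation for $(X,g,c_2)$ succeeds. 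This is exactly the point at which the new result of the paper is essential, since without the flexibility of infinitely many primes in Theorem \ref{thm:main} one could not simultaneously satisfy both interpolation requirements.
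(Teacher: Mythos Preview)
Your overall strategy is the same as the paper's: spread out, interpolate on each factor separately at a common place, pass to the lcm of the two periods, and finish with Strassmann. The gap is precisely at the coordination step you flag, and your resolution of it does not quite work as written.

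You assert that ``only finitely many places need to be excluded'' for the \'etale interpolation on $X$, so that the good locus is cofinite and therefore meets the infinite set $S$ supplied by Theorem~\ref{thm:main}. This is correct when $K$ is a number field, but in general $K$ is only finitely generated over $\mathbb{Q}$ and may have positive transcendence degree; the ambient object is then $\Spec(S)$ for a finitely generated $\mathbb{Z}$-algebra $S$ of Krull dimension $\ge 2$. The locus of bad reduction for $(X,g,c_2)$ is merely a proper \emph{closed} subset of $\Spec(S)$, which typically contains infinitely many maximal ideals, and an arbitrary infinite set of maximal ideals need not meet its dense open complement $U$. So Theorem~\ref{thm:main} as literally stated (an infinite set of inequivalent completions) is not strong enough to force the intersection you need.

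The paper fixes this by invoking not Theorem~\ref{thm:main} but the sharper Proposition~\ref{prop:main} together with Remark~\ref{rmk33}: the set $\mathcal{P}$ of maximal ideals at which the $\mathbb{P}^1$-interpolation succeeds is \emph{Zariski dense} in $\Spec(R)$. After enlarging $R$ to a ring $S$ that also carries the model of $X$ (Remark~\ref{rmk32}), Zariski density guarantees that $\mathcal{P}$ meets the dense open $U\subseteq \Spec(S)$ over which $g$ extends to an unramified endomorphism, and from that point the argument proceeds exactly as you describe.
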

The outline of this paper is as follows. In \S\ref{inter}, we give general interpolation results, which apply to rational maps having at least four distinct non-superattracting fixed points.  In \S\ref{thm}, we prove Theorem \ref{thm:main} by reducing to the case considered in \S\ref{inter}. In \S\ref{DML} we prove Corollary \ref{cor:main} and in \S\ref{conc} we make some concluding remarks, which show the difficulties with trying to extend our interpolation results to higher dimensions.  
 \section{$p$-Adic interpolation}\label{inter}
 In this section we prove a special case of Theorem \ref{thm:main}.  We let $L$ be a field of characteristic zero and we let $h(x)\in L(x)$, and we let $c\in L$.  We wish to understand the orbit of $c$ under the self-map $h$.  
 
 As it turns out, the difficult case is when $h(x)$ has degree at least two. Then by a result of Fatou \cite{fat1,fat2,jul} we know that $h$ has at most $2{\rm deg}(h)-2$ periodic superattracting cycles (i.e., orbits of points $a\in \bar{L}$ with the property that $h^m(a)=a$ and $(h^m)'(a)=0$) and since $h$ has a Zariski dense set of periodic points, some iterate of $h$ will have at least four non-superattracting fixed points.  Then by the remarks after the statement of Theorem \ref{thm:main} show, we may replace $h$ by an iterate and assume that it has at least four non-superattracting fixed points.  Moreover, we can conjugate $h$ by a suitable fractional linear transformation and assume that $\infty$ is fixed by $h$ and is not superattracting, which means that the degree of the numerator of $h$ is one greater than the degree of the denominator.  
   
 Throughout this section, we will thus assume that the above remarks apply to $h(x)$ and we write

 \be
 h(x) = p(x)/q(x), \gcd(p(x),q(x))=1,~{\rm deg}(p)=1+{\rm deg}(q).
 \label{eq:1}
  \ee

By factoring over the algebraic closure of $K$ we then have
 \be 
 p(x)  = C(x- \alpha_1)^{a_1}\cdots (x - \alpha_s)^{a_s},
  \label{eq:3}
 \ee 
 \be 
 q(x) = (x - \beta_1)^{b_1}\cdots (x - \beta_{t})^{b_{t}},
  \label{eq:4}
 \ee 
 \be 
 p(x) - xq(x) = C' (x - \gamma_1)^{c_1} \cdots (x - \gamma_u)^{c_u},
  \label{eq:5}
 \ee 
 \be
  p'(x)q(x) - q'(x)p(x) = C'' (x - \delta_1)^{d_1} \cdots (x - \delta_v)^{d_v},
   \label{eq:6}
 \ee 
 where 
 \begin{equation}
 \mathcal{T}:=\{ \alpha_1,\ldots, \alpha_s, \beta_1,\ldots, \beta_{t}, \gamma_1,\ldots, \gamma_u,\delta_1,\ldots, \delta_v\}\subseteq \overline{K},\end{equation}
 and $\alpha_1,\ldots ,\alpha_s,\beta_1,\ldots ,\beta_{t}$ are pairwise distinct, $C,C',C''$ are nonzero, $\gamma_1,\ldots ,\gamma_u$ are pairwise distinct, and $\delta_1,\ldots ,\delta_v$ are pairwise distinct.  
 
 Since $h(x)$ has at least three non-superattracting fixed points in $\mathbb{P}^1\setminus \{\infty\}$, the number of distinct roots of $p(x)-x q(x)$, is at least $3$, and we may assume that $\gamma_1,\gamma_2,\gamma_3$ are disjoint from $\{\delta_1,\ldots ,\delta_v\}$.  
 Since $h(x)$ is non-constant we also have $v>0$.   

 Consider the ring \be
 \label{eq:7}R := \mathbb{Z}[c, 6^{-1}, C^{\pm 1}, (C')^{\pm 1}, (C'')^{\pm 1}, \mathcal{T}][\mathcal{S}^{-1}],\ee
 where $\mathcal{S}$ is the union of the nonzero elements of $\mathcal{T}$ along with the set of elements that can be expressed as a difference of two distinct elements from $\mathcal{T}$.  Inverting $6$ is not technically necessary, but we do this for convenience as the $p$-adic arguments that we will use are slightly cleaner for primes larger than $3$.

 We now give a description of the orbit of the point $c$ under the map $h$ as a fraction of elements of $R$.  We let 
 \be A_0=c,~ B_0=1.\ee  Then for each $n$ we will give coprime elements $A_n,B_n$ of $R$ with the property that
 $A_n/B_n = h^n(c)$.  
 To do this, for $n\ge 0$, we define
 \begin{equation}
 A_{n+1} = C(A_n-\alpha_1 B_n)^{a_1}\cdots (A_n-\alpha_s B_n)^{a_s} = p(A_n/B_n) \cdot B_n^{{\rm deg}(p(x))}
 \label{eq:8}
 \end{equation}
 and
  \begin{equation}
   \label{eq:9}
 B_{n+1} = B_n (A_n-\beta_1 B_n)^{b_1}\cdots (A_n-\beta_t B_n)^{b_t} = q(A_n/B_n) \cdot B_n^{{\rm deg}(p(x))}.
 \end{equation}
 Observe that 
\begin{align} &= B_n A_{n+1} - A_n B_{n+1} \nonumber \\ &= 
 B_n B_{n+1} (h(A_n/B_n) - A_n/B_n)\nonumber \\
 &= B_n B_{n+1} ( p(A_n/B_n) - q(A_n/B_n) A_n/B_n) q(A_n/B_n)^{-1}\nonumber \\
 &= C' B_n B_{n+1} (A_n - \gamma_1 B_n)^{c_1} \cdots (A_n - \gamma_u B_n)^{c_u} B_n^{-c_1-\cdots -c_u} B_{n+1}^{-1} B_n^{{\rm deg}(p)}\nonumber
 \end{align}
  Since ${\rm deg}(p(x)-xq(x))=c_1+\cdots + c_u \le {\rm deg}(p(x))$, there is some $\ell_n\ge 1$ such that 
  \be \label{eq:10}
B_n A_{n+1} - A_n B_{n+1}  =C' B_n^{\ell_n} (A_n - \gamma_1 B_n)^{c_1} \cdots (A_n - \gamma_u B_n)^{c_u}\ee 

 \begin{lem} Adopt the notation from Equations (\ref{eq:1})--(\ref{eq:9}).
Then for each $n\ge 0$, the elements $A_n$ and $B_n$ generate the unit ideal in $R$.
\label{lem:unit}
 \end{lem}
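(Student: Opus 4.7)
My plan is to prove this by induction on $n$, using only the recursive definitions (\ref{eq:8}) and (\ref{eq:9}) together with the arrangement of units in the ring $R$; identity (\ref{eq:10}) will not actually be needed. The base case is trivial, since $B_0=1$.

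For the inductive step, I would assume $(A_n,B_n)=R$ and suppose for contradiction that $A_{n+1}$ and $B_{n+1}$ both lie in a common maximal ideal $\mathfrak{m}$ of $R$. From (\ref{eq:8}) and the fact that $C$ is a unit in $R$, some factor $A_n-\alpha_i B_n$ must lie in $\mathfrak{m}$. From (\ref{eq:9}), either $B_n\in\mathfrak{m}$ or some $A_n-\beta_j B_n\in\mathfrak{m}$.

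If $B_n\in\mathfrak{m}$, then $\alpha_i B_n\in\mathfrak{m}$, so $A_n\equiv A_n-\alpha_i B_n\equiv 0\pmod{\mathfrak{m}}$; hence both $A_n$ and $B_n$ lie in $\mathfrak{m}$, contradicting the inductive hypothesis. Otherwise, subtracting the two membership relations gives
\[
(\alpha_i-\beta_j)B_n \;=\; (A_n-\beta_j B_n)-(A_n-\alpha_i B_n) \;\in\; \mathfrak{m}.
\]
Because $\gcd(p(x),q(x))=1$, the roots $\alpha_i$ and $\beta_j$ are distinct elements of $\mathcal{T}$, so $\alpha_i-\beta_j$ is a nonzero element of the set $\mathcal{S}$ inverted in the definition (\ref{eq:7}) of $R$, hence a unit. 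Therefore $B_n\in\mathfrak{m}$, and we are reduced to the previous subcase, yielding the required contradiction.

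The main obstacle here is really only a bookkeeping one: one must verify that the set $\mathcal{S}$ inverted in the construction of $R$ is exactly rich enough to drive the argument — namely, $C$ must be a unit to pull the ``bad factor'' out of (\ref{eq:8}), and each nonzero difference $\alpha_i-\beta_j$ must be a unit to couple (\ref{eq:8}) with (\ref{eq:9}). Once these two observations are in place, the induction is entirely formal, and in particular it does not require inverting the $\gamma_k$'s, $\delta_\ell$'s, $6$, $C'$, or $C''$, which will play their roles later in the section.
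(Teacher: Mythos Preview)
Your proof is correct and follows essentially the same line as the paper's: induction on $n$, reduction to a maximal ideal $\mathfrak{m}$ containing $A_{n+1}$ and $B_{n+1}$, extraction of a factor $A_n-\alpha_i B_n\in\mathfrak{m}$ via the unit $C$, the same case split on whether $B_n\in\mathfrak{m}$, and the use of $\alpha_i-\beta_j\in R^\times$ to force the contradiction. Your closing remark about exactly which inversions in $R$ are actually used here is a helpful clarification but does not constitute a different method.
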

 \begin{proof}
 We prove this by induction on $n$.  When $n=0$ it is immediate, since $B_0=1$.  Now suppose that the result holds whenever $n\le m$ with $m\ge 1$ and consider the case when $n=m+1$.
If $A_{m+1}R+B_{m+1}R\neq R$, there is a maximal ideal $\mathfrak{p}$ that contains both $A_{m+1}$ and $B_{m+1}$. 
 Then since $C$ is a unit in $R$, we must have $A_m-\alpha_i B_m\in \mathfrak{p}$ for some $i$ and either $B_n\in \mathfrak{p}$ or $A_m-\beta_j B_m\in\mathfrak{p}$ for some $j$.  
 If $B_m\in \mathfrak{p}$ then we see $A_m = (A_m-\alpha_i B_m)+\alpha_i B_m\in \mathfrak{p}$, which is impossible by our induction hypothesis.  Thus we may assume that $B_m\not\in \mathfrak{p}$ and $A_m-\beta_j B_m\in\mathfrak{p}$ for some $j$.  
 Since $p(x)$ and $q(x)$ are coprime, $\{\alpha_1,\ldots , \alpha_s\}\cap \{\beta_1,\ldots ,\beta_t\}$ is empty, and so if
 $A_m-\alpha_i B_m\in\mathfrak{p}$ and $A_n-\beta_j B_n\in\mathfrak{p}$, then $(\alpha_i-\beta_j)B_j\in \mathfrak{p}$, which is again impossible since $\alpha_i-\beta_j$ is a unit in $R$. Thus we obtain the desired result.  
 \end{proof}
 
  We now prove a useful lemma, in which we make use of the $S$-unit theorem (see \cite[Theorem 6.1.3]{EG}).  We recall that if $K$ is a field of characteristic zero and $G\le K^*$ is a finitely generated subgroup of the multiplicative group, then the $S$-unit theorem concerns solutions $(X_1,\ldots ,X_n)\in G^n$ to 
the equation $$\sum_{i=1}^n \rho_i X_i = 0$$ with $\rho_1,\ldots ,\rho_n$ fixed nonzero elements of $K$.  We say that $(X_1,\ldots ,X_n)$ is a \emph{non-degenerate} solution if no proper non-trivial subsum of $\sum \rho_i X_i$ vanishes.  Then the $S$-unit theorem says that up to scaling by elements of $G$ there are only finitely many non-degenerate solutions $(X_1,\ldots ,X_n)\in G^n$ to the above equation.  We recall that a subset $\mathcal{P}$ of the prime spectrum of a commutative ring is Zariski dense if the intersection of the prime ideals in $\mathcal{P}$ is contained in the nil radical of the ring. 

 \begin{lem} \label{l2} Adopt the notation from Equations (\ref{eq:1})--(\ref{eq:9}), and suppose that $u\ge 3$ and that $\gamma_1,\gamma_2, \gamma_3\not \in \{\delta_1,\ldots ,\delta_v\}$.
 If the sequence $\{A_n/B_n\}$ is not eventually periodic then there is a Zariski dense set of maximal ideals $\mathcal{P}$ of $R$ such that for each $\mathfrak{p}\in \mathcal{P}$ there is some natural number $n$ such that the following hold:
\begin{enumerate}
\item $A_n B_{n+1}-B_n A_{n+1}\in \mathfrak{p}$;
\item $B_n\cdot B_{n+1}\not\in \mathfrak{p}$; and 
\item $(p'(A_n/B_n) q(A_n/B_n) - q'(A_n/B_n) p(A_n/B_n))B_n^{{\rm deg}(p(x))+{\rm deg}(q(x))-1}$ is not in $\mathfrak{p}$.
\end{enumerate}
 \end{lem}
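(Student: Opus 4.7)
The plan is to reduce the three conditions of the lemma to the single membership statement ``$A_n-\gamma_i B_n\in\mathfrak{p}$ for some $n\ge 0$ and some $i\in\{1,2,3\}$,'' and then to produce a Zariski dense set of such $\mathfrak{p}$ by applying the $S$-unit theorem to a three-term linear relation among the fixed-point differences. For the reduction, assume $A_n-\gamma_i B_n\in\mathfrak{p}$ with $i\in\{1,2,3\}$. Lemma~\ref{lem:unit} gives $A_n R+B_n R=R$, and the identity $A_n=(A_n-\gamma_i B_n)+\gamma_i B_n$ forces $B_n\notin\mathfrak{p}$. Since $\gamma_i$ is a finite fixed point of $h$ it cannot coincide with any pole $\beta_j$, so $\gamma_i-\beta_j\in R^\times$; combining this with \eqref{eq:9} rules out $B_{n+1}\in\mathfrak{p}$, giving~(2). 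Condition~(1) is immediate from \eqref{eq:10}, which has $A_n-\gamma_i B_n$ as a factor. For~(3), the degree identity $\deg(p'q-q'p)=\deg p+\deg q-1$ (valid because $\deg p=1+\deg q$) converts \eqref{eq:6} into
\[
\bigl(p'(A_n/B_n)q(A_n/B_n)-q'(A_n/B_n)p(A_n/B_n)\bigr)B_n^{\deg p+\deg q-1}=C''\prod_{j=1}^v(A_n-\delta_j B_n)^{d_j},
\]
and the hypothesis $\gamma_i\notin\{\delta_1,\dots,\delta_v\}$ for $i\in\{1,2,3\}$ makes every $\gamma_i-\delta_j$ a unit in $R$; hence $A_n-\delta_j B_n\in\mathfrak{p}$ together with $A_n-\gamma_i B_n\in\mathfrak{p}$ would force $B_n\in\mathfrak{p}$, a contradiction, so (3) holds.

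Next I would establish Zariski density by contradiction. If the set of qualifying $\mathfrak{p}$ is not Zariski dense, then since $R$ is a reduced finitely generated $\mathbb{Z}$-algebra there is a nonzero $r\in R$ lying in every such $\mathfrak{p}$. Set $R':=R[1/r]$; by the correspondence of maximal ideals under localization of Jacobson rings, every $A_n-\gamma_i B_n$ for $n\ge 0$ and $i\in\{1,2,3\}$ must then be a unit in $R'$. Since $R'$ is still a finitely generated $\mathbb{Z}$-algebra, its unit group $G:=(R')^\times$ is a finitely generated abelian group (a classical consequence of the $S$-unit theorem). The three fixed-point differences satisfy the linear relation
\[
(\gamma_3-\gamma_2)(A_n-\gamma_1 B_n)+(\gamma_1-\gamma_3)(A_n-\gamma_2 B_n)+(\gamma_2-\gamma_1)(A_n-\gamma_3 B_n)=0,
\]
whose coefficients lie in $R^\times\subset G$. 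Dividing through by $(\gamma_1-\gamma_3)(A_n-\gamma_2 B_n)$ rewrites this as $\alpha U_n+\beta V_n=1$ with $\alpha,\beta\in R^\times$, $U_n=(A_n-\gamma_1 B_n)/(A_n-\gamma_2 B_n)$, $V_n=(A_n-\gamma_3 B_n)/(A_n-\gamma_2 B_n)$, and $\alpha U_n,\beta V_n\in G$. The $S$-unit theorem then yields only finitely many such pairs $(U_n,V_n)$ (all automatically non-degenerate since each summand is a unit). But the map $x\mapsto\bigl((x-\gamma_1)/(x-\gamma_2),\,(x-\gamma_3)/(x-\gamma_2)\bigr)$ is injective on $\mathbb{P}^1$, so $(U_n,V_n)=(U_m,V_m)$ forces $h^n(c)=h^m(c)$; combined with the finiteness of pairs this makes $\{h^n(c)\}$ eventually periodic, contradicting our hypothesis.

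The most delicate step is the reduction in the first paragraph, where each of conditions (1)--(3) must be recast in a form that exploits the inversions at $\mathcal{S}$ built into $R$; once that reduction is in hand, the $S$-unit argument is standard, with the hypothesis $u\ge 3$ (three distinct non-superattracting finite fixed points) entering precisely to furnish a nontrivial three-term linear relation to which the $S$-unit theorem applies.
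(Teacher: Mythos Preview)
Your proof is correct and follows essentially the same approach as the paper's: both arguments localize at a hypothetical nonzero element to force the three quantities $A_n-\gamma_i B_n$ ($i=1,2,3$) to be units, set up the same three-term linear relation among them, and invoke the $S$-unit theorem to deduce eventual periodicity. The only differences are organizational---you do the reduction from ``$A_n-\gamma_iB_n\in\mathfrak{p}$'' to conditions (1)--(3) first and use \eqref{eq:9} directly to rule out $B_{n+1}\in\mathfrak{p}$, whereas the paper does the $S$-unit step first and uses (1) together with Lemma~\ref{lem:unit} for that verification---but the substance is identical.
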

 \begin{proof}
 We may assume that the sequence $\{A_n/B_n\}$ is not eventually periodic.
 Let $\mathcal{P}$ denote the set of maximal ideals $\mathfrak{p}$ for which conditions (1)--(3) hold.  If $\mathcal{P}$ is not Zariski dense, then there is some nonzero $f\in R$ such that $f$ is in every prime in $\mathcal{P}$.  Then after replacing $R$ by $R[1/f]$, we may assume that $\mathcal{P}$ is empty.  Let $U$ denote the group of units of $R$, which is a finitely generated abelian group by Roquette's theorem \cite{Roq}.
 
 By assumption $\gamma_1,\gamma_2, \gamma_3\not \in \{\delta_1,\ldots ,\delta_v\}$.
 Suppose that $A_n-\gamma_1 B_n$, $A_n-\gamma_2 B_n$, $A_n-\gamma_3 B_n$ are all in $U$ for every $n$.  Then we pick $\rho_1, \rho_2, \rho_3 \in R$, not all zero, such that 
 $\rho_1+\rho_2+\rho_3=\gamma_1 \rho_1+\gamma_2 \rho_2+\gamma_3 \rho_3=0$. 
 Since $\gamma_1, \gamma_2, \gamma_3$ are pairwise distinct, we in fact have $\rho_1,\rho_2,\rho_3$ are all nonzero.
 
   Then by construction
 $$\rho_1 (A_n-\gamma_1 B_n) + \rho_2 (A_n-\gamma_2 B_n) +\rho_3 (A_n-\gamma_3 B_n) = 0$$ for every $n\ge 0$.  Since every solution to 
 $\rho_1 x_1+\rho_2 x_2+\rho_3 x_3 = 0$ with $x_1x_2x_3\neq 0$ is necessarily non-degenerate, by the $S$-unit theorem there are only finitely many solutions $(x_1,x_2,x_3)$ in $U^3$ up to scaling.  
 
 It follows that there must exist $n$ and $m$ with $n<m$ such that
 $(A_n-\gamma_1 B_n)/(A_n-\gamma_2 B_n) = (A_m-\gamma_1 B_m)/(A_m-\gamma_2 B_m)$.  In other words, $\phi(A_n/B_n)=\phi(A_m/B_m)$, where $\phi(x)=(x-\gamma_1)/(x-\gamma_2)$.  Since $\phi$ is an automorphism of $\mathbb{P}^1$, we then conclude that $A_n/B_n=A_m/B_m$ and so the sequence $\{A_i/B_i\}$ is eventually periodic, which is a contradiction.

 It follows that there is some $n$ such that $A_n-\gamma_i B_n\not\in U$ for some $i\in \{1,2,3\}$.  Then $A_n-\gamma_i B_n\neq 0$ since otherwise, we would have $A_{n+1}/B_{n+1}= A_n/B_n$ and so we conclude that there is some maximal ideal $\mathfrak{p}$ of $R$ such that $A_n-\gamma_i B_n\in \mathfrak{p}$.  In particular, $B_n A_{n+1}-A_n B_{n+1}\in \mathfrak{p}$.  
 
 If $B_n\in \mathfrak{p}$ then $A_n\in \mathfrak{p}$ since $A_n-\gamma_i B_n\in \mathfrak{p}$, and this is impossible by Lemma \ref{lem:unit}.
 
 Now if $B_{n+1}\in \mathfrak{p}$ then since $$B_n A_{n+1}-A_n B_{n+1}\in\mathfrak{p}$$ and $B_n\not\in \mathfrak{p}$, we see that $A_{n+1}\in\mathfrak{p}$, which contradicts the conclusion of Lemma \ref{lem:unit}.
 
 Finally, if $$p'(A_n/B_n) q(A_n/B_n) - q'(A_n/B_n) p(A_n/B_n))B_n^{{\rm deg}(p(x))+{\rm deg}(q(x))-1}$$ is in $\mathfrak{p}$ then since $B_n$ is not in $\mathfrak{p}$ and since $C''$ is a unit, by Equation (\ref{eq:10}) we must have
 $A_n -\delta_j B_n\in \mathfrak{p}$ for some $j$, and so $(\delta_j-\gamma_i)B_n$ is in $\mathfrak{p}$.  But since $\gamma_i\neq \delta_j$, we have that $\gamma_i-\delta_j$ is a unit in $R$ by construction and since $B_n$ is not in $\mathfrak{p}$, we then see that this cannot hold. 
 The result follows.   
  \end{proof}
We will now obtain our interpolation of the orbit of $c$ under $h$ by applying a result of Poonen \cite{poonen}.  We need a simple lemma, which will give us the hypotheses needed to apply Poonen's result.  We recall that if $K$ is a field with a non-archimedean absolute value $|~|$ and $R$ is a subring of $K$ then the \emph{Tate algebra} $R\langle x_1,\ldots ,x_d\rangle$ is the subset of $R[[x_1,\ldots ,x_d]]$ consisting of power series that converge on the unit polydisc of $K^d$.  
\begin{lem}\label{lem:o}
Let $\mathfrak{o}$ be a complete discrete valuation ring, suppose that there is a prime $\pi\in \mathbb{Z}$ with $\pi\ge 5$ such that $|\pi|<1$ in $\mathfrak{o}$, and suppose that $h(x)=p(x)/q(x)$ with $p(x),q(x)\in \mathfrak{o}[x]$.  Suppose further that $c\in \mathfrak{o}$ is such that:
\begin{enumerate}
\item $|q(c)|=1$;
\item $h(c)\equiv c~(\bmod \pi^2\mathfrak{o})$; and
\item $h'(c)\equiv 1~(\bmod ~\pi\mathfrak{o})$.
\end{enumerate} 
Then the map 
$f(x):=\pi^{-1}(h(c+\pi x)-c)$ is in $\mathfrak{o}\langle x\rangle$ and satisfies $f(x)\equiv x~(\bmod ~\pi\mathfrak{o})$ for all $x\in \mathfrak{o}$.  
\end{lem}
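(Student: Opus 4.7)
The plan is a direct Taylor expansion. The key idea is that hypothesis (1) makes the Taylor coefficients of $h$ at $c$ integral, so after rescaling by $\pi$ the expansion lies in the Tate algebra $\mathfrak{o}\langle x\rangle$, and the three hypotheses then translate coefficient by coefficient into the stated congruence for $f$.

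First, since $p,q\in\mathfrak{o}[x]$ and $c\in\mathfrak{o}$, the expansions
\[ p(c+\pi x)=\sum_{k\ge 0}\tfrac{p^{(k)}(c)}{k!}(\pi x)^k,\qquad q(c+\pi x)=\sum_{k\ge 0}\tfrac{q^{(k)}(c)}{k!}(\pi x)^k \]
have coefficients in $\mathfrak{o}$, as each $p^{(k)}(c)/k!$ is an $\mathfrak{o}$-linear combination of binomial coefficients times powers of $c$, and similarly for $q$. By hypothesis (1), $q(c)$ is a unit of $\mathfrak{o}$, so $q(c+\pi x)=q(c)\bigl(1+\pi x\,w(x)\bigr)$ for some $w(x)\in\mathfrak{o}\langle x\rangle$, and the geometric-series inverse of $1+\pi x\,w(x)$ converges in $\mathfrak{o}\langle x\rangle$ because $|\pi|<1$. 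Consequently
\[ h(c+\pi x)=\sum_{k\ge 0}a_k\,(\pi x)^k,\qquad a_k:=\tfrac{h^{(k)}(c)}{k!}\in\mathfrak{o}, \]
and the bound $|a_k\pi^k|\le|\pi|^k\to 0$ shows that $h(c+\pi x)\in\mathfrak{o}\langle x\rangle$.

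I would then form
\[ f(x)=\pi^{-1}\bigl(h(c+\pi x)-c\bigr)=\pi^{-1}(a_0-c)+a_1x+\sum_{k\ge 2}a_k\pi^{k-1}x^k \]
and inspect each coefficient modulo $\pi$. Hypothesis (2) gives $a_0-c=h(c)-c\in\pi^2\mathfrak{o}$, so $\pi^{-1}(a_0-c)\in\pi\mathfrak{o}$; hypothesis (3) gives $a_1=h'(c)\equiv 1\pmod{\pi\mathfrak{o}}$; and for $k\ge 2$ the factor $\pi^{k-1}$ alone forces $a_k\pi^{k-1}\in\pi\mathfrak{o}$. Hence every coefficient of $f$ lies in $\mathfrak{o}$, so $f\in\mathfrak{o}\langle x\rangle$, and every coefficient of $f(x)-x$ lies in $\pi\mathfrak{o}$, whence the pointwise congruence $f(x)\equiv x\pmod{\pi\mathfrak{o}}$ for $x\in\mathfrak{o}$ follows because $\pi\mathfrak{o}$ is closed under convergent sums.

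There is no real obstacle; the lemma is pure $\pi$-adic bookkeeping. The one place where a hypothesis is genuinely used beyond nominal integrality is the invertibility of $q(c+\pi x)$ in $\mathfrak{o}\langle x\rangle$, which is exactly what hypothesis (1) is there to ensure; everything else is reading off valuations from (2) and (3).
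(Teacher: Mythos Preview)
Your proof is correct and in fact slightly cleaner than the paper's.  Both arguments Taylor-expand $h$ about $c$ and read off the congruence coefficient by coefficient, but they differ in how they control the higher-order terms.  The paper only records that $h^{(r)}(c)\in\mathfrak{o}$ and must then separately show $\pi^{r-1}/r!\in\pi\mathfrak{o}$ via the Legendre estimate $v_\pi(r!)<r/(\pi-1)$; this is precisely where the hypothesis $\pi\ge 5$ is invoked.  You instead establish the sharper fact $a_k=h^{(k)}(c)/k!\in\mathfrak{o}$, after which the coefficient $a_k\pi^{k-1}$ of $x^k$ in $f$ lies in $\pi\mathfrak{o}$ for $k\ge 2$ with no factorial estimate at all --- so your argument never actually uses $\pi\ge 5$.

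One small point deserves a word of care.  You invert $1+\pi x\,w(x)$ in $\mathfrak{o}\langle x\rangle$, which a priori only gives that the coefficient of $x^k$ in $h(c+\pi x)$, namely $a_k\pi^k$, lies in $\mathfrak{o}$; that is weaker than $a_k\in\mathfrak{o}$.  The cleanest fix is to set $y=\pi x$ and note that $p(c+y),\,q(c+y)\in\mathfrak{o}[y]$ with constant term $q(c)\in\mathfrak{o}^\times$, so $q(c+y)$ is invertible in the \emph{formal} power-series ring $\mathfrak{o}[[y]]$ and hence $h(c+y)=\sum_k a_k y^k\in\mathfrak{o}[[y]]$, giving $a_k\in\mathfrak{o}$ directly.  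With that one-line adjustment your argument is complete.
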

\begin{proof}  We have $q(c+\pi x) \equiv q(c)~(\bmod~\pi)$ and so $|q(c+\pi x)|=1$ for all $x\in \mathfrak{o}$.  It follows that $h^{(r)}(c+\pi x)\in \mathfrak{o}$ for all $r\ge 0$ and all $x\in \mathfrak{o}$.  
Then 
\begin{align*}
f(x) &= \pi^{-1}\left( h(c+\pi x)-c\right)\\
&= \pi^{-1}\left( \sum_{r\ge 0} h^{(r)}(c) \pi^{r} x^r/r! - c\right)\\
&= (h(c)-c)\pi^{-1} + h'(c)x + \sum_{r\ge 2} h^{(r)}(c) \frac{\pi^{r-1}}{r!} x^r.
\end{align*}
Since $\pi\neq 2,3$ and since $|r!|_{\pi} > \pi^{-r/(\pi-1)}$, we see that $\pi^{r-1}/r! \in \pi \mathfrak{o}$ for all $r\ge 2$, and that $|h^{(r)}(c) \frac{\pi^{r-1}}{r!} |\to 0$ as $r\to\infty$ and hence $f(x)\in \mathfrak{o}\langle x\rangle$.   Next, for $x\in \mathfrak{o}$, $$f(c+\pi x)- x = (h(c)-c)\pi^{-1} + (h'(c)-1)x+ \sum_{r\ge 2} h^{(r)}(c) \frac{\pi^{r-1}}{r!} x^r \equiv 0~(\bmod~\pi\mathfrak{o}),$$ by our assumptions and the remarks above. The result follows.
\end{proof}

\begin{prop}\label{prop:main}
Adopt the notation of Equations (\ref{eq:1})--(\ref{eq:10}).  Then there is a Zariski dense set of maximal ideals $\mathcal{P}$ of $R$ with the following properties:
\begin{enumerate}
\item[(i)]  each $\mathfrak{p}\in \mathcal{P}$ induces a non-archimedean absolute value $|~|_{\mathfrak{p}}$ on the field of fractions, $K_{\mathfrak{p}}$, of the completion of the local ring $R_{\mathfrak{p}}$;
\item[(ii)] for each $\mathfrak{p}\in \mathcal{P}$, there is a natural number $a$ and elements $$g_0(z),\ldots ,g_{a-1}(z)\in \mathfrak{o}\langle z\rangle$$ such that for each $i\in \{0,\ldots ,a-1\}$, $h^{an+i}(c)=g_i(n)$ for all $n$ sufficiently large, where $\mathfrak{o}$ is the valuation subring of $K_{\mathfrak{p}}$ consisting of elements $r$ with $|r|_{\mathfrak{p}}\le 1$.
\end{enumerate}
\end{prop}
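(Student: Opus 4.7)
The plan is to combine Lemma \ref{l2} with Lemma \ref{lem:o} and Poonen's interpolation theorem \cite{poonen}. If the orbit $\{h^n(c)\}_{n\ge 0}$ is eventually periodic, one easily verifies the conclusion with the $g_i$ taken as constants and $\mathcal{P}$ any Zariski dense set of maximal ideals of residue characteristic at least $5$, so I will assume the orbit is not eventually periodic. Then Lemma \ref{l2} produces a Zariski dense set $\mathcal{P}$ of maximal ideals of $R$ together with, for each $\mathfrak{p}\in\mathcal{P}$, an integer $N=N(\mathfrak{p})$ satisfying its three conditions. These conditions say exactly that the reduction $\bar c_N:=\overline{A_N/B_N}\in R/\mathfrak{p}$ is a non-superattracting fixed point of the reduction $\bar h$; in particular $\bar h$ is regular at $\bar c_N$, whence by induction $\bar h^j(\bar c_N)=\bar c_N$ and $B_{N+j}\notin\mathfrak{p}$ for every $j\ge 0$. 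Writing $\mathfrak{o}$ for the completion of $R_\mathfrak{p}$, this gives $h^j(c_N)\in\mathfrak{o}$ with $q(h^j(c_N))\in\mathfrak{o}^\times$ for all $j\ge 0$.

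The second step is to replace $h$ by an iterate $h^a$ for which the hypotheses of Lemma \ref{lem:o} hold at $c_N$. Let $p\ge 5$ be the residue characteristic of $\mathfrak{o}$. By pigeonhole applied to the finite set $\mathfrak{o}/p^2\mathfrak{o}$, the sequence $\{h^j(c_N) \bmod p^2\}$ is eventually periodic; enlarging $N$ if necessary, I may assume it is purely periodic of period $a_0$, so that $h^{a_0}(c_N)\equiv c_N\pmod{p^2\mathfrak{o}}$. Setting $a=a_0 e$, where $e$ is the order of $(h^{a_0})'(c_N)$ in the finite group $(\mathfrak{o}/p\mathfrak{o})^\times$, yields both $h^a(c_N)\equiv c_N\pmod{p^2\mathfrak{o}}$ and $(h^a)'(c_N)\equiv 1\pmod{p\mathfrak{o}}$. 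Since $\bar h^a$ is regular at $\bar c_N$, one can represent $h^a=P/Q$ with $P,Q\in\mathfrak{o}[x]$ and $Q(c_N)\in\mathfrak{o}^\times$ by cancelling common factors modulo the uniformizer. Lemma \ref{lem:o} (with $\pi=p$) then produces $f(x):=p^{-1}(h^a(c_N+px)-c_N)\in\mathfrak{o}\langle x\rangle$ satisfying $f(x)\equiv x\pmod{p\mathfrak{o}}$.

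Poonen's theorem applied to $f$ yields $G(z)\in\mathfrak{o}\langle z\rangle$ with $G(n)=f^n(0)$ for every $n\in\mathbb{Z}_{\ge 0}$, and a short induction then gives $h^{an}(c_N)=c_N+p\,G(n)$ for all $n\ge 0$. For each $i\in\{0,\ldots,a-1\}$ the same construction applies with $c_N$ replaced by $c^{(i)}:=h^i(c_N)=h^{N+i}(c)$: the two hypotheses transfer from $c_N$ to $c^{(i)}$ using the commutation $h^a\circ h^i=h^i\circ h^a$ together with the fact that $(h^i)'(c_N)$ is a unit in $\mathfrak{o}$. This produces $G_i\in\mathfrak{o}\langle z\rangle$ with $h^{an}(c^{(i)})=c^{(i)}+p\,G_i(n)$. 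Writing $N+i=aq_i+i'$ with $i'\in\{0,\ldots,a-1\}$ and defining $g_{i'}(n):=c^{(i)}+p\,G_i(n-q_i)$ (which lies in $\mathfrak{o}\langle n\rangle$ since $q_i\in\mathbb{Z}$), one has $g_{i'}(n)=h^{an+i'}(c)$ for all $n\ge q_i$. As $i$ ranges over $\{0,\ldots,a-1\}$, so does $i'\equiv N+i\pmod a$, producing the required interpolations.

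The main obstacle is the second step: the simultaneous choice of $a$ so that both $h^a(c_N)\equiv c_N\pmod{p^2\mathfrak{o}}$ and $(h^a)'(c_N)\equiv 1\pmod{p\mathfrak{o}}$ (rather than only the weaker congruences modulo the uniformizer of $\mathfrak{o}$); a secondary technical point is obtaining a local representation of $h^a$ as $P/Q$ with $Q(c_N)$ a unit, so that Lemma \ref{lem:o} applies verbatim.
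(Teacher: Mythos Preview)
Your outline matches the paper's strategy (Lemma \ref{l2} $\to$ Lemma \ref{lem:o} $\to$ Poonen), but there is a genuine gap in the second step, and it is exactly the one you flag as the ``main obstacle''.

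You write ``let $\mathfrak{o}$ be the completion of $R_{\mathfrak{p}}$'' and then invoke pigeonhole on ``the finite set $\mathfrak{o}/p^2\mathfrak{o}$'' and take the order of $(h^{a_0})'(c_N)$ in ``the finite group $(\mathfrak{o}/p\mathfrak{o})^\times$''. Neither finiteness claim is true in general. The ring $R$ of Equation (\ref{eq:7}) can have Krull dimension larger than $1$ (this happens whenever $c$ or the coefficients of $h$ are transcendental over $\mathbb{Q}$), and then every maximal ideal $\mathfrak{p}$ has height $>1$. In that case the completion $\widehat{R_{\mathfrak{p}}}$ is not a DVR; for instance with $R=\mathbb{Z}[t]$ and $\mathfrak{p}=(p,t)$ one gets $\widehat{R_{\mathfrak{p}}}\cong\mathbb{Z}_p[[t]]$, so $\widehat{R_{\mathfrak{p}}}/p^2\cong(\mathbb{Z}/p^2)[[t]]$ is infinite and $1+t$ has infinite order in $(\widehat{R_{\mathfrak{p}}}/p)^\times$. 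Thus your pigeonhole and your choice of $e$ both fail. Taking instead the valuation ring of $K_{\mathfrak{p}}$ (which is what the statement of the proposition actually asks for) does give a DVR, but its residue field can still be infinite, so the same objections apply.

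The paper repairs this as follows. First it shrinks $\mathcal{P}$ to lie in the regular locus, so that the associated graded of $R_{\mathfrak{p}}$ is a domain and the $\mathfrak{p}$-adic order really defines a rank-one valuation on $K_{\mathfrak{p}}$; you omit this step. Second, the pigeonhole is carried out not in $\mathfrak{o}/\pi^2\mathfrak{o}$ but in the genuinely finite Artinian quotients $R_{\mathfrak{p}}/\mathfrak{p}^kR_{\mathfrak{p}}$: all the $c_n$ lie in $R_{\mathfrak{p}}$ (not merely in $\mathfrak{o}$), the quotient $R_{\mathfrak{p}}/\mathfrak{p}^k$ is finite because the residue field $\mathbb{F}$ is finite, and for $k$ large enough one has $\mathfrak{p}^kR_{\mathfrak{p}}\subseteq\pi^2\mathfrak{o}$ (since $\pi$ has some fixed finite valuation). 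The derivative condition is handled the same way, using that $h'(c_n)\equiv h'(c_m)\pmod{\mathfrak{p}}$ lies in the finite group $\mathbb{F}^\times$. Once you make these two adjustments your argument goes through and coincides with the paper's.
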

\begin{proof}
By Lemma \ref{l2} there is a Zariski dense set $\mathcal{P}$ of maximal ideals $\mathfrak{p}$ such that items (1)--(3) from the statement of Lemma \ref{l2} hold.  
Since the regular locus of ${\rm Spec}(R)$ is a dense open set, we can replace $\mathcal{P}$ by a dense subset with the additional property that $R_{\mathfrak{p}}$ is a regular local noetherian ring for $\mathfrak{p}\in \mathcal{P}$.  

Since $R_{\mathfrak{p}}$ is regular, its associated graded ring $$\bigoplus_{n\ge 0} \mathfrak{p}^nR_{\mathfrak{p}}/\mathfrak{p}^{n+1}R_{\mathfrak{p}}$$ is a polynomial ring over the residue field $\mathbb{F}:=R_{\mathfrak{p}}/\mathfrak{p}R_{\mathfrak{p}}$; moreover, $\mathbb{F}$ is a finite field by the Nullstellensatz \cite[Theorem 4.19]{Eis}.

Since the associated graded ring of the local ring $R_{\mathfrak{p}}$ is an integral domain, we have an absolute value $|~|=|~|_{\mathfrak{p}}$ on $R_{\mathfrak{p}}$ given by $|0|=0$ and for $a$ nonzero, $|a|=|\mathbb{F}|^{-\nu(a)}$, where $\nu(a)$ is the largest nonnegative integer $r$ such that $a\in \mathfrak{p}^r R_{\mathfrak{p}}$.   Such an $r$ necessarily exists by the Krull intersection theorem.
  
Then this absolute value extends to an absolute value on $K_{\mathfrak{p}}$, where $K_{\mathfrak{p}}$ is the field of fractions of the completion of $R_{\mathfrak{p}}$.  We now let $\mathfrak{o}$ denote the valuation subring of $K_{\mathfrak{p}}$ consisting of elements of absolute value at most $1$.  Then $\mathfrak{o}$ is a discrete valuation ring and there is a unique prime $\pi \in \mathbb{Z}$ such that $|\pi | < 1$, since $\mathbb{F}$ is finite.

Let $c_i = A_i/B_i\in R_{\mathfrak{p}}$ for $i\ge 0$.  Then by assumption $$h(c_m)\equiv 
c_m~(\bmod~\mathfrak{p}R_{\mathfrak{p}})\qquad {\rm and}\qquad h'(c_m)\not\equiv 0~(\bmod~\mathfrak{p}
R_{\mathfrak{p}}).$$  

Moreover, by our choice of $\mathfrak{p}$ we have that $|B_n|=1$ for 
all $n\ge m$ and so $c_n\equiv c_m~(\bmod ~  \mathfrak{p}R_{\mathfrak{p}})$ for $n\ge 
m$.

It follows that there exist $m',m''\ge m$ with $m'>m''$ such that $c_{m'}\equiv c_{m''}
~(\bmod ~\pi^2\mathfrak{o})$ and $h'(c_{m'})\equiv 1~(\bmod ~\pi \mathfrak{o})$.  
Thus we see that after replacing $h$ by $h^{m'-m''}$, we may assume that the conditions in Lemma \ref{lem:o} are satisfied and so the map $$f(x) := \pi^{-1}(h(c_{m''}+\pi x)-c_{m''})$$
%= \psi^{-1}\circ h\circ \psi,$$ where $\psi(x) = c+px$.  Then since $B_n\not\in \mathfrak{p}$ for $n\ge m$, we see that $h^{(r)}(c+px)\in \mathfrak{o}$ for all $x\in B(0,1)$.  Thus
 satisfies $f(x)\equiv x~(\bmod~\pi \mathfrak{o})$.  Hence by a result of Poonen \cite{poonen} we have there is a map $g(x,n)\in \mathfrak{o}\langle x,n\rangle$ such that 
$f^n(x)=g(x,n)$.  In particular, $h^n(c_{m''}) = \pi f^n(0)+c_{m''} = \pi g(0,n) + c_{m''}$, and so we have obtained an interpolation of the orbit of $c_{m''}$ under $h$.  Since we did this at the expense of replacing $h$ by an iterate and replacing our starting point $c_0$ with a different point in the orbit, we see that this gives the desired result, as explained in the remarks following the statement of Theorem \ref{thm:main}.
\end{proof}
\section{Proof of Theorem \ref{thm:main}}\label{thm}
We now use the results of the preceding section to prove our main interpolation result.

%  \begin{thm} Let $L$ be a field of characteristic zero and let $h:\mathbb{P}^1\to \mathbb{P}^1$ be a rational map defined over $L$ and let $c\in \mathbb{P}^1(L)$.  Then there exists a finitely generated extension $K$ of $\mathbb{Q}$ over which both $c$ and $h$ are defined along with an infinite set of inequivalent non-archimedean completions $K_{\mathfrak{p}}$ such that 
%there exists a positive integer $a$ with the property that for $b\in \{0,\ldots ,a-1\}$ there exists a power series $g_b(t)\in K_{\mathfrak{p}}[[t]]$ that converges on the closed unit disc of $K_{\mathfrak{p}}$ such that $h^{an+i}(c)=g_i(n)$ for all sufficiently large $n$.
% \end{thm}

 \begin{proof}[Proof of Theorem \ref{thm:main}]
 If $h$ is of degree one, then $h$ is \'etale and the result follows from BGT.  Similarly, if the orbit of $c$ under of iteration of $h$ is preperiodic then the result holds trivially.  Thus we may assume that the orbit is infinite and that $h$ has degree at least two.  By a result of Fatou \cite{fat1,fat2,jul}, there are at most $2{\rm deg}(h)-2$ attracting periodic cycles, and since the set of periodic points of $h$ is infinite, after replacing $h$ be an iterate, we may assume that $h$ has at least four non-attracting fixed points and by enlarging $K$ and conjugating $h$ by a fractional linear transformation, we may assume that $\infty$ is fixed by $h$.  (We note that if we can interpolate the orbit of a point $\phi(c)$ under a conjugate $\phi\circ h\circ\phi^{-1}$ of $h$, then we can interpolate the orbit of $c$ under $h$, by applying $\phi^{-1}$ to our interpolating power series.) Then by Proposition \ref{prop:main} we obtain the desired result in this case.
 \end{proof}
 We now make several remarks that are potentially useful for applications of Theorem \ref{thm:main}.
 \begin{rmk}
 We observe that an analogous conclusion to that of Theorem \ref{thm:main} can be obtained for self-maps of curves.  The reason for this is that after replacing the map by a suitable iterate, it suffices to consider the case of a geometrically irreducible curve.  We can also pass to the normalization and assume that our curve is smooth, and Theorem \ref{thm:main} then handles the genus $0$ case.  The genus one case follows from \cite{BGT10} and for curves of genus $\ge 2$, every endomorphism is an automorphism by the Riemann-Hurwitz formula and has finite order \cite[Ex. IV 2.5, IV 5.2, V 1.11]{Hartshorne}, and so the result holds trivially in this last case.
  \end{rmk}
 \begin{rmk} In some cases it is useful to keep track of additional geometric data and thus to enlarge the ring $R$ in Equation (\ref{eq:7}).  We note that a finite set of additional generators from the ambient field $L$ can be added to the ring $R$ without affecting the arguments.  
 \label{rmk32}
 \end{rmk}
 \begin{rmk}\label{rmk33}
 We in fact show something strictly stronger than merely having an infinite set of pairwise distinct completions of $K$ in Theorem \ref{thm:main}.  The proofs shows that there is a finitely generated subring $R$ of $K$ whose field of fractions is $K$ that is a subring of the valuation ring for each of our absolute values $|~|$ and the set of elements $r\in R$ such that $|r|<1$ for each of the absolute values we construct is $\{0\}$. 
 \end{rmk}
 \section{An instance of the dynamical Mordell-Lang Conjecture}\label{DML}
 In this section, we apply Theorem \ref{thm:main} to obtain an instance of the dynamical Mordell-Lang conjecture for split endomorphisms of a certain form.  We make use of the results of \cite{BGT10}, which is a precursor to the work of Poonen, and uses results about embedding finitely generated rings into $p$-adic rings rather than completions.  Nevertheless it is straightforward to translate these results to the framework we work with.
 
 \begin{proof}[Proof of Corollary \ref{cor:main}]
 We write $c=(c',c'')\in \mathbb{P}^1\times X$.
 
As in the proof of Theorem \ref{thm:main}, if $h$ has degree one, then $h$ is \'etale and we can infer the result directly from \cite{BGT10}.  Thus we may assume that $h$ has degree at least $2$ and by replacing $\Phi$ (and hence $h$) by an iterate and possibly conjugating $h$ by an automorphism of $\mathbb{P}^1$ (i.e., making a change of variables), we may assume that $h$ satisfies the hypotheses from \S\ref{inter} and in particular we now adopt the notation from Equations (\ref{eq:1})--(\ref{eq:10}).  As the remarks following the statement of Theorem \ref{thm:main} show, we can replace $\Phi$ by a suitable iterate and still obtain the desired result for the original map $\Phi$. 

It is sufficient to consider the case when $X$ is smooth and geometrically irreducible by the argument given in \cite[Theorem 1.3]{BGT10}. Thus we assume that $X$ is a irreducible and smooth quasiprojective variety.  Let $\rho: X \longrightarrow \mathbb{P}^M(\mathbb{C})$ be an embedding of $X$ into projective space.  Then \cite[Theorem 4.1]{BGT10} shows there is a finitely generated $\mathbb{Z}$-subalgebra $S$ of $\mathbb{C}$ for which we obtain a model $\mathcal{X}\subseteq \mathbb{P}^M_{{\rm Spec}(S)}$ of $X$ over ${\rm Spec}(S)$.   By adding a finite set of additional elements to $S$, we may assume that $S$ is a finitely generated $R$-algebra, where $R$ is as in Equation \ref{eq:7}.

Then \cite[Proposition 4.3]{BGT10} shows that there is a dense open subset $U$ of ${\rm Spec}(S)$ such that there is a scheme $\mathcal{X}_U$ that is smooth and quasiprojective over $U$ whose generic fibre is $X$ such that the endomorphism $g$ of $X$ extends to an unramified endomorphism $g_U$ of $\mathcal{X}_U$.  Since $U$ is a dense open subset of ${\rm Spec}(S)$, by Proposition \ref{prop:main} combined with Remark \ref{rmk32} we have some maximal ideal $\mathfrak{p}$ that is in $U$, where (i) and (ii) from Proposition \ref{prop:main} hold for the maximal ideal $\mathfrak{p}$.

In particular, if we let $K_{\mathfrak{p}}$ be the field of fractions of the completion of the local ring $S_{\mathfrak{p}}$ and let $\mathbb{F}$ denote the residue field $S_{\mathfrak{p}}/\mathfrak{p}S_{\mathfrak{p}}$, then there is a natural number $a$ and elements $$h_0(z),\ldots ,h_{a-1}(z)\in \mathfrak{o}\langle z\rangle$$ such that for each $i\in \{0,\ldots ,a-1\}$, $h^{an+i}(c')=h_i(n)$ for all $n$ sufficiently large, where $\mathfrak{o}$ is the valuation subring of $K_{\mathfrak{p}}$.

Since $K_{\mathfrak{p}}$ is the field of fractions of the completion of a finitely generated $\mathbb{Z}$-algebra with respect to a maximal ideal $\mathfrak{p}$, there is a unique prime $\ell$ such that it is isomorphic to a topological subfield of $\mathbb{C}_{\ell}$.  
Then the arguments of \cite{BGT10}\footnote{While the arguments are done over $\mathbb{Z}_p$ they work with any complete rank one discrete valuation ring of mixed characteristic with finite residue field.} show that if we regard $X(\mathbb{C}_{\ell})$ as a $d$-dimensional $\mathbb{C}_{\ell}$-manifold, then after replacing $c''$ by $g^m(c'')$ for some $m$, there is some integer $b\ge 1$ and analytic open neighbourhood $V$ of $g^m(c'')$ inside $X(K_{\mathfrak{p}})$ that is invariant under $g^b$ and an analytic bijection $\iota: V\to {\mathfrak{o}}^d$ such that 
$\iota \circ g^{bn}(g^m(c'')) = (g_1(n),\ldots ,g_d(n))$, where $g_1(z),\ldots ,g_d(z)\in\mathfrak{o}\langle z\rangle$.  

In particular, the arguments of \cite[Theorem 4.1]{BGT10} show that if we take $e$ to be the least common multiple of $a$ and $b$ then the set of sufficiently large positive integers in $\{n\colon \Phi^{en+i}(c)\in Y\}$ can be realized as the set of common positive integer zeros of a finite set of maps in $\mathfrak{o}\langle z\rangle$.  In particular, by Strassman's theorem we get that $$\{n\colon \Phi^{en+i}(c)\in Y\}$$ either contains all sufficiently large natural numbers $n$ or it is a finite set.  The result follows.
\end{proof}
 
  Occasionally one can use data from canonical heights or other methods to prove the dynamical Mordell-Lang for certain classes of endomorphisms.  Such methods do not seem to be generally applicable to the general case we consider.  As an example, consider $\mathbb{P}^1\times E$, where $E$ is an elliptic curve, and let $\Phi=(g,[2])$, where $[2]$ is multiplication by $2$ and $g$ is a rational map of degree four.  Then the heights of $g^n(a)$ and $[2^n]\cdot b$ for a point $(a,b)\in \mathbb{P}^1\times E$ with $a$ not preperiodic under $g$ and $b$ not a torsion point of $E$, are both asymptotic to a nonzero constant times $4^n$ as $n\to\infty$ and so there are non-periodic curves $C\subseteq \mathbb{P}^1\times E$ where one cannot naively use information from heights to rule out the orbit having infinite intersection with $C$.  
\section{Concluding remarks} \label{conc} 
We have shown that one can interpolate self-maps of curves, and it is natural to ask whether one can obtain similar interpolation results for endomorphisms of higher dimensional varieties.  Unfortunately, this fails even for surfaces.  As a simple example, consider the map $f:\mathbb{A}^2\to \mathbb{A}^2$ given by $f(x,y)=(x+1,y(x+1))$.  Then 
$f^n(0,1) = (n,n!)$ and so for each prime $p$ we have $|n!|_p \to 0$ and so there is no way to interpolate the orbit of $(0,1)$ under $f$.  

A more reasonable goal is to consider simultaneous interpolation for several rational maps $h_1,\ldots, h_m\in \mathbb{C}(x)$ to obtain the case of split rational maps from $\left(\mathbb{P}^1\right)^m$ to itself.  In this case, we do not know whether this can be done even when $m=2$.  Heuristics (see \cite[\S5]{BGHKST}) suggest, however, that one should be able to obtain the split case for rational maps under general conditions \cite[Conjecture 8.4.0.19]{BGT16}.
\section*{Acknowledgments}
We thank Dragos Ghioca and Tom Tucker for many helpful comments and suggestions.  In particular, we are grateful to Dragos Ghioca for mentioning the example following the proof of Corollary \ref{cor:main}.  
 
\end{document}